\documentclass[a4paper,10pt,reqno]{amsart}
\usepackage{amsfonts,amsmath,amssymb,bbm,amsthm,amsbsy}
\usepackage{graphicx}
\usepackage{geometry}
\usepackage{float}
\usepackage{verbatim}
\usepackage{setspace}
\usepackage{siunitx}
\usepackage{paralist}
\usepackage{enumitem}
\usepackage{xcolor}
\usepackage{lipsum}
\usepackage[english]{babel}
\usepackage{booktabs}
\usepackage{array}
\usepackage{subfig}
\usepackage{caption}
\usepackage{mathtools}
\usepackage{stmaryrd}
\usepackage{stackengine,scalerel}
\usepackage{hyperref}
\usepackage[utf8]{inputenc}

\newenvironment{frcseries}{\fontfamily{frc}\selectfont}{}

\makeatletter
\newtheorem*{rep@theorem}{\rep@title}
\newcommand{\newreptheorem}[2]{
\newenvironment{rep#1}[1]{
 \def\rep@title{#2 \ref{##1}}
 \begin{rep@theorem}}
 {\end{rep@theorem}}}
\makeatother

\newtheorem{thm}{Theorem}[section]
\newtheorem{lemma}[thm]{Lemma}
\newtheorem{prop}[thm]{Proposition}
\newtheorem{corr}[thm]{Corollary}

\newtheorem*{thm*}{Theorem}
\newtheorem*{lemma*}{Lemma}
\newtheorem*{prop*}{Proposition}
\newtheorem*{corr*}{Corrolary}
\newtheorem*{claim*}{Claim}

\newtheorem{innercustomthm}{Theorem}
\newenvironment{customthm}[1]
  {\renewcommand\theinnercustomthm{#1}\innercustomthm}
  {\endinnercustomthm}

\newtheorem{innercustomcorr}{Corollary}
\newenvironment{customcorr}[1]
  {\renewcommand\theinnercustomcorr{#1}\innercustomcorr}
  {\endinnercustomcorr}

\newtheorem{innercustomrmk}{Remark}
\newenvironment{customrmk}[1]
  {\renewcommand\theinnercustomrmk{#1}\innercustomrmk}
  {\endinnercustomrmk}

\theoremstyle{remark}
\newtheorem{rmk}[thm]{Remark}

\newtheorem*{rmk*}{Remark}
\newtheorem*{conj*}{Conjecture}
\newtheorem*{quest*}{Question}

\theoremstyle{definition}
\newtheorem{defn}[thm]{Definition}
\newtheorem{exmp}[thm]{Example}

\newtheorem*{defn*}{Definition}
\newtheorem*{exmp*}{Example}

\newreptheorem{theorem}{Theorem}
\newreptheorem{corollary}{Corollary}
\newreptheorem{proposition}{Proposition}

\usepackage{fancyhdr}
\usepackage{tikz}
\usepackage{tikz-cd}

\usepackage{IEEEtrantools}

\newenvironment{equ*}[1]{\begin{IEEEeqnarray*}{#1}}{\end{IEEEeqnarray*}}

\newcommand{\Pc}{\mathcal{P}}

\DeclareFontFamily{U}{mathx}{}
\DeclareFontShape{U}{mathx}{m}{n}{<-> mathx10}{}
\DeclareSymbolFont{mathx}{U}{mathx}{m}{n}
\DeclareMathAccent{\widecheck}{0}{mathx}{"71}

\newcommand{\inj}{\hookrightarrow}
\newcommand{\sur}{\twoheadrightarrow}

\DeclareMathOperator{\Hom}{Hom}

\DeclareMathOperator{\Fun}{\mathbf{Fun}}

\DeclareMathOperator{\Img}{Im}

\newcommand{\Set}{\mathbf{Set}}
\newcommand{\Grp}{\mathbf{Grp}}
\newcommand{\Ab}{\mathbf{Ab}}

\newcommand{\ModR}{\mathbf{Mod}(R)}

\newcommand{\Pro}{\mathbf{Pro}}

\newcommand{\Cat}{\mathbf{Cat}}

\newcommand{\PGrp}{\mathbf{PGrp}}

\newcommand{\PAb}{\mathbf{PAb}}

\newcommand{\PModR}{\mathbf{PMod}(R)}

\title{Profinite Cosheaves Valued in Pro-regular Categories}
\author{Jiacheng Tang}
\thanks{Email: jiacheng.tang@postgrad.manchester.ac.uk; Address: Alan Turing Building, University of Manchester, Manchester, M13 9PY, United Kingdom.}

\calclayout
\begin{document}
\maketitle

\begin{abstract}
We prove that the category of profinite cosheaves valued in a pro-regular category (satisfying mild assumptions) is itself canonically a pro-completion. As a corollary, we extend Wilkes's cosheaf-bundle equivalence from profinite modules to profinite groups.
\end{abstract}

\section{Introduction}
\label{sec1}

Profinite coproducts have been used extensively in the study of profinite groups and graphs (see \cite{haran}, \cite{melfree} and \cite{ribesgraph}). These are coproducts of families $\{A_x\}_{x\in X}$ of profinite groups or modules which are indexed not by a set, but by a profinite space $X$. There has been recent work that studies these coproducts categorically (see \cite{gareth}, \cite{boggi}, \cite{jc3} and \cite{jc4}). In \cite{gareth}, Wilkes shows that profinite coproducts of profinite modules can be equivalently described as global cosections objects of profinite cosheaves valued in the category of profinite modules. The current paper extends the theory from profinite modules to any pro-regular category satisfying mild additional assumptions.

\subsection*{Aim and Main Results} Our main result is the following categorical equivalence. Let $\mathbf{C}=\Pro(\mathbf{D})$, where $\mathbf{D}$ is a small regular category, and assume that $\mathbf{C}$ satisfies the assumptions before Example \ref{regeg}. Let $\mathbf{CoSh}(\mathbf{C})$ be the category of cosheaves valued in $\mathbf{C}$ (see Definition \ref{defcosheaf}). Briefly, $\mathbf{CoSh}(\mathbf{C})$ has objects pairs $(A,X)$, where $X$ is a profinite space and $A$ is a cosheaf over the topological space $X$ valued in $\mathbf{C}$. We also let $\mathbf{CoSh}(\mathbf{C})_\mathrm{fin}\inj\mathbf{CoSh}(\mathbf{C})$ be the full subcategory consisting of objects $(A,X)$, where $X$ is finite and every costalk $A_x\in\mathbf{C}$ of $A$ is actually in $\mathbf{D}$. Equivalently, $\mathbf{CoSh}(\mathbf{C})_\mathrm{fin}=\mathbf{Fam}(\mathbf{D})$ is the free finite coproduct completion of $\mathbf{D}$. Then, we have the following.

\begin{customthm}{\ref{maineq}}
With our assumptions, there is an equivalence $\mathbf{CoSh}(\mathbf{C})=\Pro(\mathbf{CoSh}(\mathbf{C})_\mathrm{fin})$.
\end{customthm}

The theorem makes certain exactness properties of $\mathbf{CoSh}(\mathbf{C})$ easy to verify.

\begin{customcorr}{\ref{exactcorr}}
The category $\mathbf{CoSh}(\mathbf{C})$ is extensive.
\end{customcorr}

Another consequence we have is that the profinite coproduct used in the literature is, in some sense, the only way of defining a coproduct indexed by profinite spaces which extends finite coproducts.

\begin{customcorr}{\ref{glil}}
The global cosections functor $\mathbf{CoSh}(\mathbf{C})\to\mathbf{C}\colon(A,X)\mapsto A(X)$ commutes with inverse limits. In particular, the global cosections functor $\mathbf{CoSh}(\mathbf{C})=\Pro(\mathbf{CoSh}(\mathbf{C})_\mathrm{fin})\to\mathbf{C}$ is the (unique) inverse-limit-preserving extension of the functor $$\mathbf{CoSh}(\mathbf{C})_\mathrm{fin}\to\mathbf{C}\colon (A,X)\mapsto A(X)=\coprod_{x\in X}A_x.$$
\end{customcorr}

Finally, by taking $\mathbf{D}$ to be the category of finite groups in the main theorem (so $\mathbf{C}=\PGrp$ is the category of profinite groups), we extend Wilkes's cosheaf-bundle equivalence (\cite[Theorem 3.5]{gareth}) from profinite modules to profinite groups. Let $\Pro$ denote the category of profinite spaces and fix $X\in\Pro$. Let $\mathbf{CoSh}(X,\PGrp)$ denote the category of cosheaves over $X$ valued in $\PGrp$. Also, let $\Grp(\Pro_{/X})$ denote the category of group objects in the slice category $\Pro_{/X}$.

\begin{customrmk}{\ref{corrrmk}}
There is an equivalence $\mathbf{CoSh}(X,\PGrp)=\Grp(\Pro_{/X})$.
\end{customrmk}

\subsection*{Outline of Paper}

We will assume basic knowledge of category theory (refer to \cite{maccat}), sheaf theory (refer to \cite{macsheaves}), pro-completions (refer to \cite{prolimits} or \cite{catandsh}) and regular categories (refer to \cite{barr}). Occasionally, we will use facts about extensive categories (refer to \cite{extensive}) and profinite groups (refer to \cite{profinite}).

In Section 2, we study the general theory of cosheaves over profinite spaces valued in a category $\mathbf{C}$ in which inverse limits commute with finite colimits. Most of this is well known when $\mathbf{C}=\Set^\mathrm{op}$, but we were unable to find a reference that includes our setting. In Section 3, we restrict our attention to pro-regular categories $\mathbf{C}=\Pro(\mathbf{D})$ with mild additional assumptions and prove our main results.

Remark: When we write ``=" in this paper, we usually mean canonically isomorphic (or equivalent in the case of categories).

Convention: Unless otherwise specified, all rings are associative with a 1 but not necessarily commutative. All our categories are locally small and all (co)limits are small.

\subsection*{Acknowledgements}
The author would like to thank his supervisor Peter Symonds for his constant guidance and Giacomo Tendas for helpful discussions and for reading drafts of this paper.

This work was supported by the EPSRC Doctoral Training Partnership [grant number \linebreak EP/W524347/1].

\section{Profinite Cosheaves}
\label{sec2}

In this section, we study categories of cosheaves over profinite spaces. Most of this is well known from (the dual of) usual sheaf theory when the base category is $\Set^\mathrm{op}$ or the opposite of any finitary algebraic theory over $\Set$ (such as $\Grp$). The book \cite{catandsh} studies sheaf theory valued in a category satisfying the so-called IPC-property, but our base category $\mathbf{C}$ doesn't have to satisfy the dual of the IPC-property. Nevertheless, because our cosheaf condition is particularly simple (see the next definition), it's easy (and useful) to give details below for the convenience of the reader. We recall that we write $\Pro=\Pro(\Set_\mathrm{fin})$ for the category of profinite spaces.

The important takeaway from this section is the key lemma (Lemma \ref{invsection}), which describes certain inverse limits in the category $\mathbf{CoSh}(\mathbf{C})$ of cosheaves valued in $\mathbf{C}$.

\begin{defn}\label{defcosheaf}
Let $\mathbf{C}$ be a category with finite coproducts, $X$ be a profinite space and $O_c(X)$ be the poset category of clopen subsets of $X$. A \emph{precosheaf over $X$ valued in $\mathbf{C}$} is a functor $A\colon O_c(X)\to \mathbf{C}$. A precosheaf $A$ over $X$ is a \emph{cosheaf} if it preserves the initial object and binary disjoint coproducts, i.e.\ if $A(\varnothing)=0$, and whenever $U,V\subseteq X$ are disjoint clopen, the natural map $A(U)\coprod A(V)\to A(U\sqcup V)$ is an isomorphism. We will write $\mathbf{PCoSh}(X,\mathbf{C})$ and $\mathbf{CoSh}(X,\mathbf{C})$ for the categories of precosheaves and cosheaves over $X$ respectively.

Let $\mathbf{CoSh}(\mathbf{C})$ be the Grothendieck construction of the functor $$\Pro\to\Cat, X\mapsto\mathbf{CoSh}(X,\mathbf{C}).$$ Spelt out, this means that $\mathbf{CoSh}(\mathbf{C})$ is a category with objects pairs $(A,X)$, where $X\in\Pro$ and $A\in\mathbf{CoSh}(X,\mathbf{C})$. We will often abuse notation and write $A$ for the pair $(A,X)$ when $X$ is understood. Given two cosheaves $(A,X)$ and $(B,Y)$, a morphism from $A$ to $B$ is a pair $(\varphi,f)$, where $f\colon X\to Y$ is a continuous map and $\varphi$ is a natural transformation $A\circ f^{-1}\to B$.
\[
\begin{tikzcd}
                                             & O_c(X) \arrow[rd, "A"] \arrow[d, "\varphi", Rightarrow] &         \\
O_c(Y) \arrow[ru, "f^{-1}"] \arrow[rr, "B"'] & {}                                       & \mathbf{C}
\end{tikzcd}
\]
\end{defn}

Let us first point out that the cosheaves defined above are equivalent to usual cosheaves, provided that $\mathbf{C}$ has all colimits. This is probably well known and a special case of this is stated on \cite[pages 4-5]{jc3} with a brief explanation. Let $\mathbf{CoSh}(X,\mathbf{C})''$ be the full subcategory of the functor category $\Fun(O(X),\mathbf{C})$ containing functors which satisfy the usual cosheaf condition, i.e.\ whenever $U=\bigcup_i U_i$ is an open cover of an open $U\subseteq X$, the following canonical diagram is a coequaliser: $$\coprod_{i,j}A(U_i\cap U_j)\rightrightarrows\coprod_iA(U_i)\to A(U).$$ Here, $O(X)$ is the poset of open subsets of $X$. Let $\mathbf{CoSh}(X,\mathbf{C})'$ be the full subcategory of $\mathbf{PCoSh}(X,\mathbf{C})=\Fun(O_c(X),\mathbf{C})$ containing functors which satisfy the cosheaf condition for arbitrary clopen covers $U=\bigcup_i U_i$. Then, we have equivalences $$\mathbf{CoSh}(X,\mathbf{C})=\mathbf{CoSh}(X,\mathbf{C})'=\mathbf{CoSh}(X,\mathbf{C})''.$$

In the following, we will assume that $\mathbf{C}$ has all limits and finite colimits, such that inverse limits commute with finite colimits. We now state several properties of $\mathbf{CoSh}(X,\mathbf{C})$.

\begin{prop}\label{coshcomplete}
The category $\mathbf{CoSh}(X,\mathbf{C})$ is a full subcategory of $\mathbf{PCoSh}(X,\mathbf{C})$ closed under all colimits and inverse limits (which exist in $\mathbf{PCoSh}(X,\mathbf{C})$).
\begin{proof}
The cosheaf condition is about finite coproducts in $\mathbf{C}$, which commute with colimits and inverse limits in $\mathbf{C}$.
\end{proof}
\end{prop}

\begin{prop}\label{cosheafification}
The inclusion $\mathbf{CoSh}(X,\mathbf{C})\inj\mathbf{PCoSh}(X,\mathbf{C})$ has a right adjoint, denoted by $(-)^{\mathrm{cosh}}$ and called \emph{cosheafification}.
\begin{proof}
This is analogous to the classical construction of sheafification. For a clopen subset $U\subseteq X$, let $\Pc(U)$ be the poset category of finite clopen partitions of $U$. That is, $\Pc(U)$ has objects partitions $U=\bigsqcup_{i=1}^nU_i$ with $U_i$ clopen, and $\{U_i\}\leq \{U_j\}$ if $\{U_i\}$ is a refinement of $\{U_j\}$. It's easy to see that $\Pc(U)$ is cofiltered, i.e.\ codirected.

Given a precosheaf $A$ and $\varnothing\neq U\subseteq X$ clopen, define $A^\mathrm{cosh}(U)=\varprojlim_{\{U_i\}\in\Pc(U)}\coprod_i A(U_i)$. We leave it to the reader to check that this makes $A^\mathrm{cosh}$ a precosheaf, i.e.\ a functor. To see that it is in fact a cosheaf, we simply note that if $U,V\subseteq X$ are disjoint clopen, then $\Pc(U)\times\Pc(V)\subseteq\Pc(U\sqcup V)$ is cofinal, and that inverse limits commute with finite coproducts in $\mathbf{C}$.

Finally, suppose $B$ is a cosheaf and we have a map $B\to A$. For $U\subseteq X$ clopen, we need to give a map $B(U)\to A^\mathrm{cosh}(U)$ in $\mathbf{C}$, which we take to be the map induced by the maps $B(U)=\coprod_iB(U_i)\to\coprod_iA(U_i)$ for partitions $\{U_i\}$ of $U$. One then verifies that this establishes $(-)^\mathrm{cosh}$ as being right adjoint to inclusion.
\end{proof}
\end{prop}

\begin{corr}\label{cosheaflimits}
The category $\mathbf{CoSh}(X,\mathbf{C})$ has all limits which are computed as cosheafifications of the precosheaf limits.
\end{corr}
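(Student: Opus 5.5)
The plan is to deduce the corollary formally from Proposition \ref{cosheafification}, using the standard fact that right adjoints preserve limits. We also use that limits in a functor category are computed pointwise.

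First, we note that $\mathbf{PCoSh}(X,\mathbf{C})=\Fun(O_c(X),\mathbf{C})$ has all (small) limits. This holds because $\mathbf{C}$ is assumed to have all limits and $O_c(X)$ is small. Such limits are computed pointwise: for a diagram $D\colon I\to\mathbf{PCoSh}(X,\mathbf{C})$, the limit is $U\mapsto\varprojlim_i D_i(U)$.

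Next, take a diagram $D\colon I\to\mathbf{CoSh}(X,\mathbf{C})$. Let $L=\varprojlim_i \iota D_i$ be its limit in precosheaves, where $\iota$ denotes the inclusion. We claim that $L^{\mathrm{cosh}}$ is the limit of $D$ in $\mathbf{CoSh}(X,\mathbf{C})$. The cone maps are obtained by applying $(-)^{\mathrm{cosh}}$ to the projections $L\to \iota D_i$, and composing with the isomorphisms $(\iota D_i)^{\mathrm{cosh}}\cong D_i$. These isomorphisms hold because $\iota$ is fully faithful, so the counit $\iota(B)^{\mathrm{cosh}}\to B$ of the adjunction is an isomorphism for every cosheaf $B$.

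For the universal property, we compute for a cosheaf $B$:
\[
\Hom(B,L^{\mathrm{cosh}})\cong\Hom(\iota B,L)\cong\varprojlim_i\Hom(\iota B,\iota D_i)\cong\varprojlim_i\Hom(B,D_i).
\]
Each isomorphism is natural in $B$. The first is the adjunction, the second is the universal property of $L$, and the third is full faithfulness of $\iota$. Equivalently, $(-)^{\mathrm{cosh}}$ is a right adjoint and so preserves limits, and $(\varprojlim_i\iota D_i)^{\mathrm{cosh}}\cong\varprojlim_i(\iota D_i)^{\mathrm{cosh}}\cong\varprojlim_i D_i$.

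There is no real obstacle here. The only point to check is that the counit of the adjunction is an isomorphism on cosheaves, which is immediate from full faithfulness of the inclusion. We can also see it directly: for a cosheaf $B$, every term $\coprod_i B(U_i)$ in the defining inverse limit of $B^{\mathrm{cosh}}(U)$ is canonically isomorphic to $B(U)$ via the transition maps, so the limit is $B(U)$.

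We also remark that for inverse limits no cosheafification is actually needed. By Proposition \ref{coshcomplete}, these limits are already cosheaves, so they are computed pointwise.
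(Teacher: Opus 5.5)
Your proposal is correct and is exactly the intended argument. The paper gives no separate proof and presents this as an immediate consequence of Proposition \ref{cosheafification}: since $\mathbf{CoSh}(X,\mathbf{C})$ is a full coreflective subcategory of the complete category $\mathbf{PCoSh}(X,\mathbf{C})$, its limits are obtained by cosheafifying the pointwise precosheaf limits. Your closing remark, that inverse limits need no cosheafification, also matches the paper's comment after Proposition \ref{coshRlim}.
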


\begin{rmk}
We note that Proposition \ref{cosheafification} is a special case of \cite[Theorem 3.1]{cosheafification}, but the construction of our cosheafification functor is particularly simple due to the specific Grothendieck site we are working with.
\end{rmk}

\begin{prop}
Cosheafification is exact.
\begin{proof}
It is certainly left exact, being a right adjoint. It also preserves finite colimits from the construction above and the fact that inverse limits are exact in $\mathbf{C}$.
\end{proof}
\end{prop}

Let $X\in\Pro$. There is a functor $\Delta\colon\mathbf{C}\to\mathbf{PCoSh}(X,\mathbf{C})$, the \emph{constant precosheaf functor}, which sends an object $c\in\mathbf{C}$ to the precosheaf $U\mapsto c$. Composing this with the cosheafification functor, we obtain a functor $\mathbf{C}\to\mathbf{CoSh}(X,\mathbf{C})$, still denoted by $\Delta$ and called the \emph{constant cosheaf functor}.

\begin{prop}\label{constantadj}
The constant cosheaf functor $\Delta\colon\mathbf{C}\to\mathbf{CoSh}(X,\mathbf{C})$ is right adjoint to the \emph{global cosections functor} $A\mapsto A(X)$.
\begin{proof}
We clearly have $$\Hom_{\mathbf{CoSh}(X,\mathbf{C})}(A,\Delta(c))=\Hom_{\mathbf{PCoSh}(X,\mathbf{C})}(A,\Delta(c))=\Hom_{\mathbf{C}}(A(X),c).$$
\end{proof}
\end{prop}

\begin{exmp}\label{egcosheaf}
\begin{enumerate}[label=(\roman*)]
\item\label{egcosheaf1} Suppose $X$ is finite. Then, a cosheaf $A\in\mathbf{CoSh}(X,\mathbf{C})$ is completely determined by the values $A_x=A(\{x\})$ for $x\in X$. The global cosections functor sends $A$ to the finite coproduct $A(X)=\coprod_{x\in X}A_x$.
\item\label{egcosheaf2} Suppose $X=I\sqcup\{\infty\}$ is the one-point compactification of a set $I$. Note that the clopen subsets of $X$ are precisely the finite subsets not containing $\infty$ and their complements. Suppose $\mathbf{C}$ also has all coproducts. Given a family of objects $\{c_i\}_{i\in I}$ in $\mathbf{C}$, we can define a cosheaf $C\in\mathbf{CoSh}(X,\mathbf{C})$ via $C(\{x\})=c_x$ if $x\neq\infty$ and $C(U)=\coprod_{j\in U\backslash\{\infty\}}c_j$ if $U$ is a cofinite subset containing $\infty$. The global cosections object of $C$ is the coproduct $C(X)=\coprod_{i\in I}c_i$.
\item\label{egcosheaf3} Suppose $X=I\sqcup\{\infty\}$ is still the one-point compactification of a set $I$, and that finite products and finite coproducts agree in $\mathbf{C}$ (e.g.\ if $\mathbf{C}$ is additive). Given a family of objects $\{c_i\}_{i\in I}$ in $\mathbf{C}$, we can define a cosheaf $P\in\mathbf{CoSh}(X,\mathbf{C})$ via $P(\{x\})=c_x$ if $x\neq\infty$ and $P(U)=\prod_{j\in U\backslash\{\infty\}}c_j$ if $U$ is a cofinite subset containing $\infty$. The global cosections object of $P$ is the product $P(X)=\prod_{i\in I}c_i$. Hence, with the right assumptions on $\mathbf{C}$, we see that global cosections functors of profinite cosheaves valued in $\mathbf{C}$ capture both products and coproducts in $\mathbf{C}$.
\end{enumerate}
\end{exmp}

Next, we shall study properties of the Grothendieck construction $\mathbf{CoSh}(\mathbf{C})$. The following observation is obvious.

\begin{prop}
The functor $$\mathbf{C}\to\mathbf{CoSh}(\mathbf{C}), c\mapsto (c,*)$$ is right adjoint to the global cosections functor $(A,X)\mapsto A(X)$.
\end{prop}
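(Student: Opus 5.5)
The plan is to exhibit a natural bijection
$$\Hom_{\mathbf{CoSh}(\mathbf{C})}\big((A,X),(c,*)\big)\cong\Hom_{\mathbf{C}}(A(X),c)$$
for $(A,X)\in\mathbf{CoSh}(\mathbf{C})$ and $c\in\mathbf{C}$. Here $(c,*)$ denotes the cosheaf over the one-point space $*$ with value $c$ on $*$ and $0$ on $\varnothing$. By Example \ref{egcosheaf}\ref{egcosheaf1}, this is the unique cosheaf on $*$ with costalk $c$.

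First we unwind the definition of a morphism $(\varphi,f)\colon(A,X)\to(c,*)$. The map $f\colon X\to *$ is forced, so only $\varphi$ carries information. It is a natural transformation $A\circ f^{-1}\to(c,*)$ of functors on $O_c(*)=\{\varnothing,*\}$. Since $f^{-1}(*)=X$ and $f^{-1}(\varnothing)=\varnothing$, the transformation $\varphi$ consists of two components, $A(\varnothing)=0\to 0$ and $A(X)\to c$. The first component is uniquely determined because $0$ is initial. The naturality square for the inclusion $\varnothing\subseteq *$ commutes automatically, since its source is the initial object $A(\varnothing)=0$. Hence $\varphi\mapsto\varphi_*$ is a bijection onto $\Hom_{\mathbf{C}}(A(X),c)$.

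The same conclusion also follows from Proposition \ref{constantadj} applied with $X=*$, because $\Delta(c)$ on $*$ is just $(c,*)$. Either route gives the bijection on hom-sets.

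It remains to check naturality in both variables, and this is the only mildly fiddly step. Take a morphism $(\psi,g)\colon(B,Y)\to(A,X)$. Precomposing a morphism $(\varphi,f)$ with it gives the component at $*$ equal to
$$B(Y)=B(g^{-1}X)\xrightarrow{\psi_X}A(X)\xrightarrow{\varphi_*}c,$$
which is exactly the precomposition with the induced map on global cosections. This shows at the same time that $(A,X)\mapsto A(X)$ is a functor, via $(\psi,g)\mapsto\psi_X$, and that the bijection is natural in the first variable. Naturality in $c$ is immediate, since a map $c\to c'$ acts by postcomposition on the component at $*$. I expect no real obstacle beyond keeping track of the variance in the Grothendieck construction.
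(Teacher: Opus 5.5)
Your proof is correct and matches the paper, which states this as obvious without proof; unwinding a morphism $(\varphi,f)\colon(A,X)\to(c,*)$ in the Grothendieck construction to its single nontrivial component $A(X)\to c$ (the same computation as in Proposition \ref{constantadj}) is exactly the intended verification. Your naturality check, where the composite with $(\psi,g)$ has component $\varphi_*\circ\psi_X$ at $*$, fills in the detail the paper leaves implicit.
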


To study limits in $\mathbf{CoSh}(\mathbf{C})$, we need the following analogue of \cite[Theorem 2$^\prime$]{csgrothendieck}. Note that the following is not exactly the dual of \cite[Theorem 2$^\prime$]{csgrothendieck}, but its proof is sufficiently similar that we will omit it.

\begin{prop}\label{propgcomplete}
Let $\mathbf{I}$ be a small category. Suppose $\mathbf{E}$ is a category and $F\colon\mathbf{E}\to\Cat$ is a functor such that the following hold:
\begin{itemize}
\item The category $\mathbf{E}$ has $\mathbf{I}$-limits (i.e.\ limits of shape $\mathbf{I}$).
\item For each object $e\in\mathbf{E}$, the category $Fe$ has $\mathbf{I}$-limits.
\item For each morphism $f\colon e\to e'$ in $\mathbf{E}$, the functor $Ff\colon Fe\to Fe'$ has a right adjoint.
\end{itemize}
Then, the Grothendieck construction $\int F$ has $\mathbf{I}$-limits.
\end{prop}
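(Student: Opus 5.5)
We construct the limit explicitly. Fix a diagram $D\colon\mathbf{I}\to\int F$, sending $i\mapsto(a_i,e_i)$ with $a_i\in Fe_i$. A morphism $u\colon i\to j$ goes to a pair $(\varphi_u,f_u)$, where $f_u\colon e_i\to e_j$ in $\mathbf{E}$ and $\varphi_u\colon (Ff_u)(a_i)\to a_j$ in $Fe_j$. This is the convention matching $\mathbf{CoSh}(\mathbf{C})$, where a map $(A,X)\to(B,Y)$ is $f$ together with $f_*A=A\circ f^{-1}\to B$.

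\textbf{Step 1: the base.} Set $e=\varprojlim_i e_i$ in $\mathbf{E}$, using the first hypothesis, with projections $p_i\colon e\to e_i$.

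\textbf{Step 2: the fibre diagram.} Let $R_i$ be the right adjoint of $Fp_i\colon Fe\to Fe_i$ given by the third hypothesis. Define a diagram $\mathbf{I}\to Fe$ by $i\mapsto R_i(a_i)$.

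For $u\colon i\to j$ we need a map $R_i(a_i)\to R_j(a_j)$. Since $p_j=f_u\circ p_i$, we have $Fp_j=Ff_u\circ Fp_i$. Taking right adjoints (mates) gives $R_j\cong R_i\circ R_{f_u}$, where $R_{f_u}$ is the right adjoint of $Ff_u$. The map $\varphi_u$ transposes to $a_i\to R_{f_u}(a_j)$. Applying $R_i$ gives
$$R_i(a_i)\to R_iR_{f_u}(a_j)\cong R_j(a_j).$$
Functoriality in $u$ follows from the coherence of mates under pasting of adjunctions, using the pseudofunctoriality isomorphisms of $F$. These are identities if $F$ is a strict functor to $\Cat$. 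Now set $a=\varprojlim_i R_i(a_i)$ in $Fe$, using the second hypothesis.

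\textbf{Step 3: the cone and its universal property.} The cone legs are $(\psi_i,p_i)\colon(a,e)\to(a_i,e_i)$. Here $\psi_i\colon Fp_i(a)\to a_i$ is the transpose of the projection $a\to R_i(a_i)$. Compatibility with the $\varphi_u$ is a direct check using the transposition of Step 2.

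For the universal property, take a cone $(\chi_i,g_i)\colon(b,e')\to(a_i,e_i)$. The $g_i$ factor uniquely through some $g\colon e'\to e$. Each $\chi_i\colon Fg_i(b)=Fp_i(Fg(b))\to a_i$ transposes to $Fg(b)\to R_i(a_i)$. These maps form a cone in $Fe$ by the same mate compatibility, so they factor uniquely through $a$, giving $Fg(b)\to a$. Uniqueness of the whole factorisation follows since both components are forced: $g$ by the limit in $\mathbf{E}$, and then the fibre map by adjunction plus the limit in $Fe$.

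\textbf{Main obstacle.} The only delicate point is the coherence in Step 2. We must check that the transition maps $R_i(a_i)\to R_j(a_j)$ respect identities and composition in $\mathbf{I}$, and that cones in $\int F$ correspond bijectively to cones over this fibre diagram. I would handle this by observing once and for all that the functors $F(-)$ with their right adjoints form a pseudofunctor $\mathbf{E}^{\mathrm{op}}\to\Cat$, $f\mapsto R_f$. In these terms, $\int F$ is isomorphic to the (contravariant) Grothendieck construction of $f\mapsto R_f$, with $\varphi_u$ transposed to $a_i\to R_{f_u}(a_j)$. Limits in a cartesian fibration with complete fibres and a complete base are then the standard ones: pull all $a_i$ back to the limit fibre via $R_i$ and take the fibrewise limit. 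This is the argument dual to \cite[Theorem 2$^\prime$]{csgrothendieck}.
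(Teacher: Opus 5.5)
Your proposal is correct and follows essentially the route the paper intends. The paper omits this proof, but its explicit description of inverse limits in $\mathbf{CoSh}(\mathbf{C})$ after Proposition~\ref{coshRlim} is exactly your construction: take $X=\varprojlim X_i$, pull each $A_i$ back along the right adjoint $f_i^*$ of the direct image, and take the limit in the fibre $\mathbf{CoSh}(X,\mathbf{C})$. Your appeal in the last paragraph to limits in a cartesian fibration tacitly needs the reindexing functors $R_f$ to preserve $\mathbf{I}$-limits. That holds automatically since they are right adjoints, and your Step~3, which transposes along $Fg$, avoids needing it anyway.
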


Our main theorem (Theorem \ref{maineq}) will show that $\mathbf{CoSh}(\mathbf{C})$ is a pro-completion under additional assumptions, so $\mathbf{CoSh}(\mathbf{C})$ should have all inverse limits first.

\begin{prop}\label{coshRlim}
The category $\mathbf{CoSh}(\mathbf{C})$ has all limits.
\begin{proof}
Recall that $\mathbf{CoSh}(\mathbf{C})$ is defined as the Grothendieck construction of the functor $$F\colon\Pro\to\Cat, X\mapsto\mathbf{CoSh}(X,\mathbf{C}).$$ Thus, the first bullet point of Proposition \ref{propgcomplete} is obvious and so is the second point by Corollary \ref{cosheaflimits}. For the final point, note that given a map $f\colon X\to Y$ in $\Pro$, the functor $\mathbf{CoSh}(X,\mathbf{C})\to\mathbf{CoSh}(Y,\mathbf{C})$ is simply the \emph{direct image functor} $f_*\colon A\mapsto A\circ f^{-1}$. As in (the dual of) usual sheaf theory, it has a right adjoint $f^*$, the \emph{inverse image functor}, which sends $A\in \mathbf{CoSh}(Y,\mathbf{C})$ to the cosheaf $\left(U\mapsto\varprojlim_{V\supseteq f(U)}A(V)\right)^\mathrm{cosh}$, where $U\subseteq X$ is clopen.
\end{proof}
\end{prop}

It is useful to know explicitly how inverse limits in $\mathbf{CoSh}(\mathbf{C})$ are computed (cf.\ \cite[Theorem 6.5]{nunes}). Given an inverse system $\{(A_i,X_i)\}$, its inverse limit $(A,X)$ in $\mathbf{CoSh}(\mathbf{C})$ has $X=\varprojlim X_i$, taken in $\Pro$. Let $f_i\colon X\to X_i$ be the projection maps and $f_i^*\colon\mathbf{CoSh}(X_i,\mathbf{C})\to\mathbf{CoSh}(X,\mathbf{C})$ be the inverse image functors. Then, $\{f_i^*(A_i)\}$ naturally forms an inverse system in $\mathbf{CoSh}(X,\mathbf{C})$ and we let $A=\varprojlim f_i^*(A_i)$, taken in $\mathbf{CoSh}(X,\mathbf{C})$. We remind the reader that cosheafification is not needed to compute inverse limits here (see Proposition \ref{coshcomplete}).

To be more precise, given a map $(f\colon X_i\to X_j, \varphi\colon A_i\circ f^{-1}\to A_j)$ in the inverse system, the corresponding transition map $f_i^*(A_i)\to f_j^*(A_j)$ is the cosheafification of $$U\mapsto\left(\varprojlim_{V\supseteq f_i(U)}A_i(V)\overset{\psi}{\longrightarrow}\varprojlim_{W\supseteq f_j(U)}A_j(W)     \right),$$ where $\psi$ is induced by $$\varprojlim_{V\supseteq f_i(U)}A_i(V)\to A_i(f^{-1}(W))\overset{\varphi_W}{\longrightarrow} A_j(W).$$ This looks fairly complicated, but if the spaces $X_i$ are finite, then we have an easy description of the inverse limit $(A,X)$. In the following lemma (only), let's write $f_i^*$ to mean the precosheaf inverse image functor.

\begin{lemma}[Key Lemma]\label{invsection}
Suppose $\{(A_i,X_i)\}_{i\in I}$ is an inverse system in $\mathbf{CoSh}(\mathbf{C})$ with limit $(A,X)$ and where the $X_i$ are finite. Let $f_i\colon X\to X_i$ be the projection maps. Then, the precosheaf inverse limit $\varprojlim f_i^*(A_i)$ in $\mathbf{PCoSh}(X,\mathbf{C})$ is already a cosheaf. In particular, for $U\subseteq X$ clopen, we have $A(U)=\varprojlim A_i(f_i(U))$.
\begin{proof}
First, note that as each $X_i$ is discrete, we have $f_i^*A_i(U)=A_i(f_i(U))$ for each $U\subseteq X$ clopen.
Given $U,V\subseteq X$ disjoint, we can use properties of profinite spaces to find a single $j\in I$ such that $U=f_j^{-1}f_j(U)$ and $V=f_j^{-1}f_j(V)$ (cf.\ \cite[Exercise 1.1.15/Lemma 1.1.16]{profinite}). It's easy to check that if $i\geq j$, then $U=f_i^{-1}f_i(U)$ and $V=f_i^{-1}f_i(V)$. The important point is that $f_i(U)$ and $f_i(V)$ are disjoint. Thus, we have
\begin{eqnarray*}
\varprojlim_i f_i^*A_i(U\sqcup V)&=&\varprojlim_{i\geq j} A_i(f_i(U\sqcup V))\\
&=&\varprojlim_{i\geq j} A_i(f_i(U)\sqcup f_i(V))\\
&=&\varprojlim_{i} A_i(f_i(U))\textstyle{\,\coprod\,}\varprojlim_{i} A_i(f_i(V))\\
&=&\varprojlim_if^*_iA_i(U)\textstyle{\,\coprod\,}\varprojlim_if^*_iA_i(V).
\end{eqnarray*}
It only remains to note that cosheafification commutes with inverse limits.
\end{proof}
\end{lemma}

Given a cosheaf $A\in\mathbf{CoSh}(X,\mathbf{C})$ and $x\in X$, the \emph{costalk of $A$ at $x$} is $A_x=\varprojlim_{V\ni x}A(V)\in\mathbf{C}$. In other words, $A_x$ is the inverse image functor $i^*_x$, where $i_x\colon\{x\}\inj X$ is the inclusion map. The costalk functor $(-)_x\colon\mathbf{CoSh}(X,\mathbf{C})\to\mathbf{C}$ is, of course, right adjoint to the \emph{skyscraper cosheaf functor} $\mathrm{Sky}_x$, i.e.\ the direct image functor $(i_x)_*$. There are clearly also precosheaf versions of these.

We won't need the following two results, but they seem worth stating. Both can be proven by using Yoneda in a standard way.

\begin{prop}\label{laterprop}
Cosheafification preserves costalks. That is, for $A\in\mathbf{PCoSh}(X,\mathbf{C})$ and $x\in X$, we have $A^\mathrm{cosh}_x=A_x$.
\end{prop}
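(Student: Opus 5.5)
We will show that $A^\mathrm{cosh}_x=A_x$ by a direct cofinality computation on the explicit formula from Proposition \ref{cosheafification}. The costalk of the precosheaf $A$ is $A_x=\varprojlim_{V\ni x}A(V)$. For the cosheafification we have
$$A^\mathrm{cosh}_x=\varprojlim_{V\ni x}\,\varprojlim_{\{V_i\}\in\Pc(V)}\coprod_i A(V_i).$$
We rewrite this double limit as a single limit over the poset $\mathcal{Q}$ of pairs $(V,\{V_i\})$, where $V\ni x$ is clopen and $\{V_i\}\in\Pc(V)$. The order on $\mathcal{Q}$ is refinement together with shrinking of $V$. This poset is cofiltered, and the iterated limit agrees with the limit over $\mathcal{Q}$ (the standard Fubini argument for inverse limits).

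The key step is to pass to a cofinal subposet. Let $\mathcal{Q}'\subseteq\mathcal{Q}$ consist of the pairs in which $V$ itself is one of the pieces, i.e.\ the trivial partition $\{V\}$. This subposet is cofinal. Given $(V,\{V_i\})$, let $V_{i_0}$ be the piece containing $x$. Then $(V_{i_0},\{V_{i_0}\})$ lies in $\mathcal{Q}'$ and maps to $(V,\{V_i\})$. The map is induced by the coproduct inclusion $A(V_{i_0})\to\coprod_i A(V_i)$; this is exactly the transition map coming from restricting to the smaller open $V_{i_0}$ and using the functoriality of the inner limit.

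Restricted to $\mathcal{Q}'$, the diagram is just $V\mapsto A(V)$ for $V\ni x$. So its limit is $A_x$, and cofinality gives $A^\mathrm{cosh}_x=A_x$.

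The main obstacle is to check carefully that these transition maps really form the diagram. Concretely, the map $A^\mathrm{cosh}(V')\to A^\mathrm{cosh}(V)$ for $V'\subseteq V$ must be described at the level of partitions. Given a partition $\{V_i\}$ of $V$, it uses the induced partition $\{V_i\cap V'\}$ of $V'$ and the maps $A(V_i\cap V')\to A(V_i)$. Once this is written down, the comparison map is visibly compatible with the canonical map $A^\mathrm{cosh}\to A$ from the counit of the adjunction, and the cofinal-subdiagram argument goes through.

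As an alternative, we could use Yoneda via the adjunctions. The costalk is right adjoint to $\mathrm{Sky}_x$, and $\mathrm{Sky}_x(c)$ is already a cosheaf. So for $c\in\mathbf{C}$,
$$\Hom(c,A^\mathrm{cosh}_x)=\Hom(\mathrm{Sky}_x c,A^\mathrm{cosh})=\Hom(\mathrm{Sky}_x c,A)=\Hom(c,A_x),$$
using the precosheaf skyscraper/costalk adjunction at the last step. This is the ``standard Yoneda'' argument the paper alludes to. I would present it as the main proof and keep the cofinality computation as a sanity check. The only points to verify are that $\mathrm{Sky}_x(c)$ satisfies the cosheaf condition (clear, since exactly one of two disjoint clopens contains $x$) and that the precosheaf and cosheaf skyscrapers coincide.
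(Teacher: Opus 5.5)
Your main argument is correct and is exactly the ``standard Yoneda'' proof the paper has in mind: $\Hom(c,A^{\mathrm{cosh}}_x)=\Hom(\mathrm{Sky}_x c,A^{\mathrm{cosh}})=\Hom(\mathrm{Sky}_x c,A)=\Hom(c,A_x)$, using the skyscraper/costalk adjunction, the cosheafification adjunction, and the fact that $\mathrm{Sky}_x(c)$ is already a cosheaf; your cofinality computation is also valid as a sanity check. One tiny slip: $x$ lies in at most one of two disjoint clopens, not exactly one. The cosheaf condition for $\mathrm{Sky}_x(c)$ still holds in the remaining case because $0\coprod 0=0$.
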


\begin{prop}\label{ptwise}
Let $(A,X)=\varprojlim(A_i,X_i)$ in $\mathbf{CoSh}(\mathbf{C})$ and $x=(x_i)\in X=\varprojlim X_i$. Then, we have $A_x=\varprojlim (A_i)_{x_i}$. That is, the costalks of an inverse limit in $\mathbf{CoSh}(\mathbf{C})$ are computed ``pointwise".
\end{prop}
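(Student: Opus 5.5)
We prove Proposition \ref{ptwise} by reducing costalks to limits over clopen neighbourhoods and then swapping two inverse limits. Throughout we use that in $\mathbf{CoSh}(\mathbf{C})$ the limit is computed as $X=\varprojlim X_i$ and $A=\varprojlim f_i^*(A_i)$ in $\mathbf{CoSh}(X,\mathbf{C})$. By Proposition \ref{coshcomplete}, this limit is computed objectwise on clopens, i.e.\ already in $\mathbf{PCoSh}(X,\mathbf{C})$.

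\emph{Step 1: reduce to costalks of the pulled-back cosheaves.} By definition, $A_x=\varprojlim_{V\ni x}A(V)$ with $V$ ranging over clopen neighbourhoods of $x$. Substituting $A(V)=\varprojlim_i f_i^*(A_i)(V)$ and exchanging the two inverse limits gives
\[
A_x=\varprojlim_i\varprojlim_{V\ni x}f_i^*(A_i)(V)=\varprojlim_i \bigl(f_i^*A_i\bigr)_x .
\]
The exchange is purely formal, since both are limits in $\mathbf{C}$.

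\emph{Step 2: compute the costalk of an inverse image.} We show $(f^*B)_x=B_{f(x)}$ for $f\colon X\to Y$ in $\Pro$ and $B\in\mathbf{CoSh}(Y,\mathbf{C})$. Recall that $f^*B$ is the cosheafification of the precosheaf $U\mapsto\varprojlim_{W\supseteq f(U)}B(W)$. By Proposition \ref{laterprop}, cosheafification preserves costalks, so it suffices to compute the costalk of this precosheaf:
\[
\varprojlim_{U\ni x}\ \varprojlim_{W\supseteq f(U)}B(W).
\]
This is a limit over the pairs $(U,W)$ with $x\in U$ and $f(U)\subseteq W$. The pairs $(f^{-1}(W),W)$ with $W\ni f(x)$ form a cofinal subfamily, because for any such pair $(U,W)$ we have $U\subseteq f^{-1}(W)$. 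Hence the limit equals $\varprojlim_{W\ni f(x)}B(W)=B_{f(x)}$. Alternatively, one can argue by Yoneda: $(f^*B)_x=i_x^*f^*B=(f\circ i_x)^*B=i_{f(x)}^*B$, because right adjoints compose and $f_*(i_x)_*=(i_{f(x)})_*$. This second argument is cleaner, and I would present it as the main proof.

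\emph{Step 3: combine.} Applying Step 2 with $f=f_i$ gives $(f_i^*A_i)_x=(A_i)_{f_i(x)}=(A_i)_{x_i}$. Substituting into Step 1 yields $A_x=\varprojlim_i (A_i)_{x_i}$.

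It remains to check that the transition maps agree with those of the system $\{(A_i)_{x_i}\}$. Both families are induced by the maps $\varphi$ of the inverse system, so this should be a routine naturality check. I expect the only real obstacle to be carrying out this compatibility check, together with justifying Proposition \ref{laterprop} if one does not take it as given. The Yoneda route avoids the latter entirely, since there costalks are just the inverse image functors $i_x^*$.
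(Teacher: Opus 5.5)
Your proof is correct. The paper gives no argument beyond saying that this follows ``by using Yoneda in a standard way''. Your Step~2 is such an argument: you compose the adjunctions $(i_x)_*\dashv i_x^*$ and $f_*\dashv f^*$ to get $i_x^*f^*\cong i_{f(x)}^*$. The overall organisation, however, differs from the most direct Yoneda proof.

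You pass through the explicit construction $A=\varprojlim f_i^*(A_i)$ and exchange limits. This then obliges you to check that the levelwise isomorphisms $(f_i^*A_i)_x\cong(A_i)_{x_i}$ are compatible with the transition maps. You flag that check but do not carry it out.

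The more direct argument probes $\mathbf{CoSh}(\mathbf{C})$ itself with the objects $(c,*)$. A morphism $(c,*)\to(A,X)$ is a point $x\in X$ together with a map $\mathrm{Sky}_x(c)\to A$, so
\[
\Hom_{\mathbf{CoSh}(\mathbf{C})}\bigl((c,*),(A,X)\bigr)=\bigsqcup_{x\in X}\Hom_{\mathbf{C}}(c,A_x),
\]
naturally in $(A,X)$ and compatibly with the projection to the underlying set of $X$. Apply $\Hom((c,*),-)$ to $(A,X)=\varprojlim(A_i,X_i)$ and take fibres over $x=(x_i)$; this is legitimate because limits commute with fibres and the underlying set of $X$ is $\varprojlim X_i$. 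The result is $\Hom_{\mathbf{C}}(c,A_x)=\varprojlim_i\Hom_{\mathbf{C}}(c,(A_i)_{x_i})$, naturally in $c$.

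This route uses only the universal property of the limit, with no formula for $f_i^*$ and no Proposition~\ref{coshcomplete}. The compatibility with transition maps is automatic, since it is built into the functoriality of $\Hom((c,*),-)$. What your route buys in exchange is the independently useful identity $(f^*B)_x=B_{f(x)}$.

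One slip in your secondary argument for Step~2: the pairs $(f^{-1}(W),W)$ are not cofinal for the inverse limit over pairs $(U,W)$. For that, every $(U,W)$ would have to lie above some $(f^{-1}(W'),W')$, i.e.\ $f^{-1}(W')\subseteq U$, which fails for instance when $f$ is constant. The inclusion $U\subseteq f^{-1}(W)$ that you cite points the wrong way.

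The conclusion still holds, for a different reason. The diagram depends only on $W$, and the projection $(U,W)\mapsto W$ is initial. For each $W_0\ni f(x)$, the pairs with $W\subseteq W_0$ form a nonempty family, since it contains $(f^{-1}(W_0),W_0)$. This family is connected via $(U_1\cap U_2,W_1\cap W_2)$. Since you intend the adjunction argument as the main proof, this does not affect the proposal.
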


\section{Main Theorem}
\label{sec3}

From now on, we shall assume that $\mathbf{C}=\Pro(\mathbf{D})$ is the pro-completion of a small regular category $\mathbf{D}$.

The main aim of this section is to prove our main theorem under suitable assumptions on $\mathbf{C}$. Let $\mathbf{CoSh}(\mathbf{C})_\mathrm{fin}$ be the full subcategory of $\mathbf{CoSh}(\mathbf{C})$ consisting of objects $(A,X)$, where $X$ is finite and the costalk $A_x\in\mathbf{D}$ for each $x\in X$. We should think of $\mathbf{CoSh}(\mathbf{C})_\mathrm{fin}$ as the subcategory of ``finite objects" of $\mathbf{CoSh}(\mathbf{C})$. Our goal is to prove the following.

\begin{customthm}{\ref{maineq}}
There is an equivalence $\mathbf{CoSh}(\mathbf{C})=\Pro(\mathbf{CoSh}(\mathbf{C})_\mathrm{fin})$.
\end{customthm}

Our strategy is the standard ``recognition principle".
\begin{itemize}
\item We show that every object of $\mathbf{CoSh}(\mathbf{C})$ is an inverse limit of objects in $\mathbf{CoSh}(\mathbf{C})_\mathrm{fin}$ (Lemma \ref{invdecomp}).
\item We show that for $\varprojlim A_i\in\mathbf{CoSh}(\mathbf{C})$ with $A_i\in\mathbf{CoSh}(\mathbf{C})_\mathrm{fin}$ and $B\in\mathbf{CoSh}(\mathbf{C})_\mathrm{fin}$, the canonical map $$\varinjlim\Hom_{\mathbf{CoSh}(\mathbf{C})}(A_i,B)\to\Hom_{\mathbf{CoSh}(\mathbf{C})}(\varprojlim A_i,B)$$ is surjective (Lemma \ref{factorone}).
\item Finally, we show that the above map is also injective (Theorem \ref{maineq}).
\end{itemize}

Here are some preliminary facts. We note that $\mathbf{C}$ is regular, has all limits and colimits, and that the embedding $\mathbf{D}\inj\mathbf{C}$ preserves colimits and finite limits. Moreover, we can view $\mathbf{C}=\Pro(\mathbf{D})$ as the dual of the full subcategory of left exact functors in $\Fun(\mathbf{D},\Set)$ and the embedding $\mathbf{C}=\Pro(\mathbf{D})\inj\Fun(\mathbf{D},\Set)^\mathrm{op}$ preserves colimits and inverse limits. It follows that inverse limits in $\mathbf{C}$ indeed commute with finite colimits, since direct limits in $\Set$ commute with finite limits, so the results of Section \ref{sec2} apply. In particular, inverse limits in $\mathbf{C}$ are regular, i.e.\ they preserve regular epimorphisms. See \cite{prolimits} for more details on pro-completions.

Given a map $\varphi\colon c\to c'$ in $\mathbf{C}$, we will write $\Img\varphi\in\mathbf{C}$ for the image of $\varphi$. Recall, from the definition of regular categories, that every map $\varphi\colon c\to c'$ can be (uniquely) factorised as a regular epimorphism $c\sur \Img\varphi$ followed by a monomorphism $\Img\varphi\inj c'$. If $b\inj c$ is a subobject, we might write $\varphi(b)$ for the image $\Img(b\inj c\to c')$.

\begin{lemma}\label{subinv}
If $c=\varprojlim c_i\in\mathbf{C}$, where $c_i\in\mathbf{C}$, with projection maps $\varphi_i\colon c\to c_i$ and $b\inj c$ is a subobject, then $b=\varprojlim \varphi_i(b)$. In particular, every object of $\mathbf{C}$ can be expressed as an inverse limit with epic projection (and hence transition) maps.
\begin{proof}
The composition $b\to\varprojlim\varphi_i(b)\to\varprojlim c_i=c$ is monic, but the first map is also regular epic since inverse limits are regular.
\end{proof}
\end{lemma}

We will also need the following lemma about profinite spaces.

\begin{lemma}\label{stable}
Let $X=\varprojlim_{i\in I} X_i$ be an inverse limit of finite sets, say with projection maps $f_i\colon X\to X_i$ and transition maps $f_{ij}\colon X_i\to X_{j}$. Then, for every $m\in I$, there exists $k\geq m$ such that $$\Img f_{km}=\Img f_m\subseteq X_m.$$
\begin{proof}
Note that the subsets $\{\Img f_{im}\}_{i\geq m}$ of $X_m$ form a directed family, i.e.\ if $i\geq j\geq m$, then $\Img f_{im}\subseteq\Img f_{jm}$. Since $X_m$ is finite, their intersection must be equal to one of them, say $\Img f_{km}$. It only remains to show that $$\Img f_m=\bigcap_{i\geq m}\Img f_{im}.$$ The left-to-right inclusion is clear. Conversely, suppose $x_m\in\bigcap_{i\geq m}\Img f_{im}$. For each $i\geq m$, we have $f_{im}^{-1}(x_m)\neq\varnothing,$ and a standard compactness argument shows that $\varprojlim_{i\geq m}f_{im}^{-1}(x_m)\neq\varnothing.$ But an element $x\in\varprojlim_{i\geq m}f_{im}^{-1}(x_m)\subseteq X$ satisfies $f_m(x)=x_m$.
\end{proof}
\end{lemma}

We will now assume in addition that $\mathbf{D}$ is closed under subobjects, i.e.\ if $d\in\mathbf{D}$ and $c\inj d$ is a subobject in $\mathbf{C}$, then $c\in\mathbf{D}$. We will also assume that coproducts in $\mathbf{C}$ have monic coprojection maps, i.e.\ every canonical map $c\to c\coprod c'$ is monic. This means in particular that if $A\in\mathbf{CoSh}(X,\mathbf{C})$ and $U\subseteq V$ are clopen, then $A(U)\inj A(V)$. Let us give some examples to show that these conditions are not too restrictive.

\begin{exmp}\label{regeg}
\begin{enumerate}[label=(\roman*)]
\item\label{regeg1} The category of models of a finitary algebraic theory over a regular base category is itself regular, by \cite[Theorem 5.11]{barr}. Thus, since $\Set$ is regular, so are the categories $\Grp$ of groups, $\Ab$ of abelian groups, $\Set(G)$ of $G$-sets (for a fixed group $G$) and $\ModR$ of $R$-modules (for a fixed ring $R$). Alternatively, $\Ab$ and $\ModR$ are regular because they are abelian.
\item\label{regeg2} A full subcategory of a regular category which is closed under finite limits and coequalisers is itself regular. Thus, the category $\Set_\mathrm{fin}$ of finite sets is regular, and so are the categories $\Grp_\mathrm{fin}$, $\Ab_\mathrm{fin}$, $\Set(G)_\mathrm{fin}^\mathrm{abs}$ and $\ModR_\mathrm{fin}^\mathrm{abs}$ by the property in \ref{regeg1}. The abbreviation ``abs" stands for ``abstract" and is a reminder that no topology is involved so far.
\item\label{regeg3} Let $G$ be a profinite group and consider the category $\Pro(G)$ of profinite $G$-spaces. We note that $\Pro(G)=\Pro(\Set(G)_\mathrm{fin})$, where $\Set(G)_\mathrm{fin}$ is the category of finite $G$-sets, but where the action of $G$ is still continuous. There is no obvious way to express $\Set(G)_\mathrm{fin}$ as models of a finitary algebraic theory, since we need to remember the topology of $G$. Nevertheless, we observe that $\Set(G)_\mathrm{fin}$ is a full subcategory of $\Set(G)_\mathrm{fin}^\mathrm{abs}$ which is closed under finite limits and coequalisers, so it is in fact regular. Similarly, for a profinite ring $R$, the category $\ModR_{\mathrm{fin}}$ of finite (discrete topological) $R$-modules is regular.
\item\label{regeg4} From the examples above, we see that the following small regular categories are all candidates for $\mathbf{D}$: $\Set_\mathrm{fin}$, $\Grp_\mathrm{fin}$, $\Ab_\mathrm{fin}$, $\Set(G)_\mathrm{fin}$ (for a fixed profinite group $G$) and $\ModR_{\mathrm{fin}}$ (for a fixed profinite ring $R$). The corresponding pro-completion $\mathbf{C}$ is the category $\Pro$ of profinite spaces, $\PGrp$ of profinite groups, $\PAb$ of profinite abelian groups, $\Pro(G)$ of profinite $G$-spaces and $\PModR$ of profinite $R$-modules, respectively. In all cases, $\mathbf{D}$ is closed under subobjects (since it contains literally the finite objects of $\mathbf{C}$), and coproducts in $\mathbf{C}$ have monic coprojections.
\end{enumerate}
\end{exmp}



The following is analogous to \cite[Theorem 5.3.4]{ribesgraph} and \cite[Proposition 2.11]{jc3}.

\begin{lemma}\label{invdecomp}
Every cosheaf $(A,X)\in\mathbf{CoSh}(\mathbf{C})$ can be written as an inverse limit of cosheaves in $\mathbf{CoSh}(\mathbf{C})_\mathrm{fin}$.
\begin{proof}
First, write $X=\varprojlim_{i\in I} X_i$ ($X_i$ finite), with projection maps $f_i\colon X\to X_i$ and transition maps $f_{ii'}\colon X_i\to X_{i'}$, and also write $$A(X)=\varprojlim_{j\in J} d_j\,\,\,\, (d_j\in\mathbf{D}),$$ with projection maps $\varphi_j\colon A(X)\to d_j$ and transition maps $\varphi_{jj'}\colon d_j\to d_{j'}$. Consider the directed poset $I\times J$. For $(i,j)\in I\times J$, define $X_{i,j}=X_i$ and $A_{i,j}(x_i)=\varphi_j(A(f_i^{-1}(x_i)))\inj d_j$, where $x_i\in X_i$. It is then clear that the $\{(A_{i,j},X_{i,j})\}$ form an inverse system in $\mathbf{CoSh}(\mathbf{C})_\mathrm{fin}$ over $I\times J$ and that $X=\varprojlim_{I\times J}X_{i,j}$. Observe that for $i\geq i'$ and $j\geq j'$, the transition map $A_{i,j}\to A_{i',j'}$ is just the map induced by $$\varphi_{jj'}\colon\varphi_j(A(f_{i'}^{-1}(x_{i'})))\to \varphi_{j'}(A(f_{i'}^{-1}(x_{i'}))).$$

We claim that $(A,X)=\varprojlim_{I\times J} (A_{i,j},X_i)$. Consider the map $A\to \varprojlim_{I\times J} A_{i,j}$ in $\mathbf{CoSh}(\mathbf{C})$ (which in fact lives in the non-full subcategory $\mathbf{CoSh}(X,\mathbf{C})$) induced by $\varphi_j\colon A(f_i^{-1}(x_i))\to\varphi_j(A(f_i^{-1}(x_i)))$, where $x_i\in X_i$. In view of Lemma \ref{invsection}, it suffices to show that for each $U\subseteq X$ clopen, the component $A(U)\to\varprojlim_{I\times J}A_{i,j}(f_i(U))$ is an isomorphism.

Pick $i'\in I$ such that for each $i\geq i'$, we have $U=f_{i}^{-1}f_{i}(U)$ (cf.\ the proof of Lemma \ref{invsection}). Now, consider the cofinal subset $S=\{i\in I\colon i \geq i'\}\times J\subseteq I\times J$. By Lemma \ref{subinv}, we have
\begin{eqnarray*}
\varprojlim_{S}A_{i,j}(f_i(U))&=&\varprojlim_{i\geq i'}\varprojlim_j\coprod_{x_i\in f_i(U)}\varphi_j(A(f_i^{-1}(x_i)))\\
&=&\varprojlim_{i\geq i'}\coprod_{x_i\in f_i(U)}\varprojlim_j\varphi_j(A(f_i^{-1}(x_i)))\\
&=&\varprojlim_{i\geq i'}\coprod_{x_i\in f_i(U)}A(f_i^{-1}(x_i))\\
&=&\varprojlim_{i\geq i'}A(U)\\
&=&A(U),
\end{eqnarray*}
which completes the proof.
\end{proof}
\end{lemma}

The following is analogous to \cite[Lemma 2.12]{jc3}.

\begin{lemma}\label{factorone}
Every map $(\varphi,g)\colon\varprojlim_{i\in I}(A_i,X_i)\to (B,Y)$ in $\mathbf{CoSh}(\mathbf{C})$, where the $(A_i,X_i)$ and $(B,Y)$ are in $\mathbf{CoSh}(\mathbf{C})_\mathrm{fin}$, factors through a component $(A_j,X_j)$.
\begin{proof}
Let $(A,X)=\varprojlim(A_i,X_i)$ and let $f_i\colon X\to X_i$ be the projection maps. We will first prove the lemma in the case where the projections $f_i$ are surjective. By Lemma \ref{invsection}, for each $y\in Y$, we have $A(g^{-1}(y))=\varprojlim A_i(f_i(g^{-1}(y)))$. As $B(y)\in\mathbf{D}$, the map $$A(g^{-1}(y))=\varprojlim A_i(f_i(g^{-1}(y)))\to B(y)$$ in $\mathbf{C}$ factors through a component $\psi_y\colon A_j(f_j(g^{-1}(y)))\to B(y)$. Since $Y$ is finite, we can pick a single $j$ that works for every $y\in Y$. Moreover, we can assume that we have picked $j$ such that the map $g\colon X\to Y$ factors through $h\colon X_j\to Y$. Observe that $f_j(g^{-1}(y))= h^{-1}(y)$ for each $y\in Y$, since $f_j$ is surjective. Define a map $\psi\colon A_j\circ h^{-1}\to B$ in $\mathbf{CoSh}(Y,\mathbf{C})$ as follows. For $y\in Y$, the component $A_j(h^{-1}(y))\to B(y)$ is simply $\psi_y$. It is then obvious that $(\psi,h)\colon(A_j,X_j)\to (B,Y)$ is the factorisation we want.

Next, we deal with the general case, where the $f_i$ might not be surjective. We first claim that $(A,X)=\varprojlim (A_i,\Img f_i)$. Indeed, the composition $$(A,X)\to\varprojlim (A_i,\Img f_i)\to\varprojlim(A_i,X_i)=(A,X)$$ is the identity, so the second map is split epic, but it is also easily verified to be monic in $\mathbf{CoSh}(\mathbf{C})$. By what we have proven above, the map $(\varphi,g)\colon(A,X)=\varprojlim (A_i,\Img f_i)\to (B,Y)$ factors through a component $(A_j,\Img f_j)$. It only remains to note that the map $(A,X)\to(A_j,\Img f_j)$ factors through some $(A_k,X_k)$ with $k\geq j$, using Lemma \ref{stable}.
\end{proof}
\end{lemma}

The following is analogous to \cite[Corollary 2.13]{jc3}.

\begin{thm}\label{maineq}
The canonical map $\Pro(\mathbf{CoSh}(\mathbf{C})_\mathrm{fin})\to\mathbf{CoSh}(\mathbf{C})$ is an equivalence.
\begin{proof}
By Proposition \ref{coshRlim} and Lemma \ref{invdecomp}, there is a functor $\Pro(\mathbf{CoSh}(\mathbf{C})_\mathrm{fin})\to\mathbf{CoSh}(\mathbf{C})$ which is essentially surjective. Let $(A,X)=\varprojlim (A_i,X_i)\in\mathbf{CoSh}(\mathbf{C})$ and $(B,Y)\in\mathbf{CoSh}(\mathbf{C})_\mathrm{fin}$, where $(A_i,X_i)\in\mathbf{CoSh}(\mathbf{C})_\mathrm{fin}$. The canonical map $$\tau\colon\varinjlim\Hom_{\mathbf{CoSh}(\mathbf{C})}((A_i,X_i),(B,Y))\to\Hom_{\mathbf{CoSh}(\mathbf{C})}(\varprojlim (A_i,X_i),(B,Y))$$ is surjective by Lemma \ref{factorone}, so it only remains to show that it is also injective.

Let $\{A_i,X_i\}$ have projection maps $(\varphi_i,f_i)$ and transition maps $(\varphi_{ij},f_{ij})$. Suppose, for some fixed $i,j$, that $(\alpha,g)\colon(A_i,X_i)\to(B,Y)$ and $(\beta,h)\colon(A_j,X_j)\to(B,Y)$ have the same image under $\tau$, i.e.\ $(\alpha, g)\circ(\varphi_i,f_i)=(\beta, h)\circ(\varphi_j,f_j)$. We want to show that there is some $k\geq i,j$ such that $$(\alpha,g)\circ(\varphi_{ki},f_{ki})=(\beta,h)\circ(\varphi_{kj},f_{kj}).$$ Since $X=\varprojlim X_l$, we can certainly find a $k$ such that $g\circ f_{ki}=h\circ f_{kj}$, so we can assume that we started with $i=j$ and $g=h$. For each $y\in Y$, let $U_y=f_i^{-1}(g^{-1}(y))\subseteq X$. Using Lemma \ref{invsection}, we obtain two possible compositions $$A(U_y)=\varprojlim_{l\geq i}A_l(f_l(U_y))\to A_i(f_i(U_y))\to A_i(g^{-1}(y))\overset{\alpha}{\underset{\beta}{\rightrightarrows}} B(y)$$ which coincide by assumption. As $B(y)\in\mathbf{D}$, this implies that there is some $m\geq i$ such that the two compositions $$A_m(f_m(U_y))\to A_i(f_i(U_y))\to A_i(g^{-1}(y))\overset{\alpha}{\underset{\beta}{\rightrightarrows}} B(y)$$ already coincide. Since $Y$ is finite, we can pick a single $m$ that works for every $y\in Y$. We have $f_m(U_y)\subseteq f_{mi}^{-1}(g^{-1}(y))$, possibly without equality, but by Lemma \ref{stable}, we can find some $k\geq m$ such that $\Img f_{km}=\Img f_m$. It is then clear that the index $k$ has the property we want.
\end{proof}
\end{thm}

The above theorem makes it easy to verify exactness properties of $\mathbf{CoSh}(\mathbf{C})$ which are closed under pro-completions (see \cite{prostability} for many examples of such properties). As an illustration, we have the following.

\begin{corr}\label{exactcorr}
The category $\mathbf{CoSh}(\mathbf{C})$ is extensive.
\begin{proof}
We note the equivalence $\mathbf{CoSh}(\mathbf{C})_\mathrm{fin}=\mathbf{Fam}(\mathbf{D})$, where the latter is the category of finite families in $\mathbf{D}$, or equivalently the free finite coproduct completion of $\mathbf{D}$. The statement then follows from \cite[Proposition 2.4]{extensive} and the fact that extensivity is closed under pro-completions (see \cite{prostability}).
\end{proof}
\end{corr}

The following is analogous to \cite[Proposition 5.1.7]{ribesgraph}.

\begin{corr}\label{glil}
The global cosections functor $\mathbf{CoSh}(\mathbf{C})\to\mathbf{C}\colon(A,X)\mapsto A(X)$ commutes with inverse limits. That is, if $(A,X)=\varprojlim(A_i,X_i)$ in $\mathbf{CoSh}(\mathbf{C})$, then $A(X)=\varprojlim A_i(X_i)$. In particular, the global cosections functor $\mathbf{CoSh}(\mathbf{C})=\Pro(\mathbf{CoSh}(\mathbf{C})_\mathrm{fin})\to\mathbf{C}=\Pro(\mathbf{D})$ is the (unique) inverse-limit-preserving extension of the functor $$\mathbf{CoSh}(\mathbf{C})_\mathrm{fin}\to\mathbf{C}\colon (A,X)\mapsto A(X)=\coprod_{x\in X}A_x.$$
\begin{proof}
For $d\in\mathbf{D}$, we have natural isomorphisms
\begin{eqnarray*}
\Hom_{\mathbf{C}}(\varprojlim A_i(X_i),d)&=&\varinjlim\Hom_{\mathbf{C}}(A_i(X_i),d)\\
&=&\varinjlim\Hom_{\mathbf{CoSh}(\mathbf{C})}((A_i,X_i),(d,*))\\
&=&\Hom_{\mathbf{CoSh}(\mathbf{C})}(\varprojlim(A_i,X_i),(d,*))\\
&=&\Hom_{\mathbf{CoSh}(\mathbf{C})}((A,X),(d,*))\\
&=&\Hom_{\mathbf{C}}(A(X),d),
\end{eqnarray*}
so we are done by Yoneda.
\end{proof}
\end{corr}

\begin{rmk}\label{corrrmk}\begin{enumerate}[label=(\roman*)]
\item The main conceptual point of Corollary \ref{glil} is that in any pro-regular category $\mathbf{C}=\Pro(\mathbf{D})$ (satisfying our default assumptions), there is a canonical way of defining a ``profinite coproduct". More precisely, we should think of an object $(A,X)\in\mathbf{CoSh}(\mathbf{C})$ as a family of objects $A_x\in\mathbf{C}$ continuously indexed by the profinite space $X$, where $A_x$ is the costalk of $A$ at $x$. The global cosections functor $(A,X)\mapsto A(X)$ is exactly extending the finite coproduct functor $\{A_x\}_{x\in X}\mapsto\coprod_{x\in X}A_x$.
\item\label{usefulrmk1} Recall from usual sheaf theory that there is an equivalence between sheaves and \'etale bundles. Suppose $\mathbf{D}=\Set_\mathrm{fin}$ is the category of finite sets and $\mathbf{C}=\Pro(\mathbf{D})$ is the category of profinite spaces. By Theorem \ref{maineq}, we have the following equivalence between ``profinite cosheaves" and ``profinite bundles" $$\mathbf{CoSh}(\Pro)=\Pro(\mathbf{CoSh}(\Pro)_\mathrm{fin})=\Pro(\Fun(\mathbf{2},\Set_\mathrm{fin}))=\Fun(\mathbf{2},\Pro),$$ where $\mathbf{2}$ is the interval category. This restricts to an equivalence $$\mathbf{CoSh}(X,\Pro)=\Pro_{/X}$$ for every $X\in\Pro$.

Concretely, the functor from left to right sends a cosheaf $A\in\mathbf{CoSh}(X,\Pro)$ to $\bigsqcup_{x\in X} A_x\to X$, where $A_x$ is the costalk of $A$ at $x$ and $\bigsqcup_{x\in X} A_x\subseteq A(X)\times X$ has the subspace topology, which is profinite. The functor from right to left sends a bundle $p\colon E\to X$ to the cosheaf $U\mapsto p^{-1}(U)$.

\item\label{usefulrmk2} Similarly, suppose $\mathbf{D}=\Grp_\mathrm{fin}$ is the category of finite groups and $\mathbf{C}=\Pro(\mathbf{D})$ is the category of profinite groups. By Theorem \ref{maineq} and the group analogue of \cite[Corollary 2.13]{jc3}, we see that the category $\mathbf{CoSh}(\mathbf{PGrp})$ of ``profinite cosheaves of profinite groups" is equivalent to the category of ``profinite bundles of profinite groups", as defined in \cite[Section 5.1]{ribesgraph}. This restricts to an equivalence $$\mathbf{CoSh}(X,\PGrp)=\Grp(\Pro_{/X})$$ for every $X\in\Pro$.

If $R$ is a profinite ring and $\mathbf{D}=\ModR_\mathrm{fin}$ is the category of finite (discrete topological) $R$-modules, then we recover the equivalence of cosheaves and bundles in \cite[Theorem 3.5]{gareth}.

We want to point out that \cite[Theorem 3.2]{jc4} is proven from the perspective of bundles instead of cosheaves. The commutativity of the diagrams in \cite[Theorem 3.2]{jc4} required proof, but it is obvious from the perspective of cosheaves.

\item\label{usefulrmk3} The equivalence $$\mathbf{CoSh}(X,\PGrp)=\Grp(\Pro_{/X})=\Grp(\mathbf{CoSh}(X,\Pro))$$ from combining \ref{usefulrmk1} and \ref{usefulrmk2} is trickier than it first looks. For example, products in $\mathbf{CoSh}(X,\Pro)$ require cosheafification, so for a group object $G\in\Grp(\mathbf{CoSh}(X,\Pro))$, its cosections $G(U)$ do not have to be profinite groups. On the other hand, products in $\Pro_{/X}$ are just pullbacks over $X$ in $\Pro$, so the costalks $G_x$ are profinite groups.

Put differently, the forgetful functor $\mathbf{CoSh}(X,\PGrp)\to\mathbf{CoSh}(X,\Pro)$ is induced by the forgetful functor $\PGrp\to\Pro$ costalkwise, not cosectionwise. As a concrete example, consider the terminal object $T\in\mathbf{CoSh}(X,\PGrp)$, which has trivial cosections and costalks, i.e.\ $T(U)=*$ and $T_x=*$. Forgetting the group structures on costalks, we get the terminal object of $\Pro_{/X}$, i.e.\ the object $X\to X$, which then corresponds to the cosheaf $T'\in\mathbf{CoSh}(X,\Pro)$ defined by $T'(U)=U$. Observe that the cosections object $U$ indeed does not need to be a profinite group.
\end{enumerate}
\end{rmk}

\bibliographystyle{unsrt}

\end{document}